\newcommand{\BEAS}{\begin{eqnarray*}}
\newcommand{\EEAS}{\end{eqnarray*}}
\newcommand{\BEQ}{\begin{equation}}
\newcommand{\EEQ}{\end{equation}}
\newcommand{\BIT}{\begin{itemize}}
\newcommand{\EIT}{\end{itemize}}
\newcommand{\eg}{{\it e.g.}}
\newcommand{\ie}{{\it i.e.}}
\newcommand{\ones}{\mathbf 1}
\newcommand{\reals}{{\mbox{\bf R}}}
\newcommand{\integers}{{\mbox{\bf Z}}}
\newcommand{\Tr}{\mathop{\bf Tr}}
\newcommand{\diag}{\mathop{\bf diag}}
\newcommand{\Expect}{\mathop{\bf E{}}}
\newcommand{\round}{\mathop{\bf round}}
\newcounter{oursection}
\newcounter{algorithmctr}[section]
\renewcommand{\thealgorithmctr}{\thesection.\arabic{algorithmctr}}
\newenvironment{algdesc}
   {\refstepcounter{algorithmctr}\begin{list}{}{
       \setlength{\rightmargin}{0\linewidth}
       \setlength{\leftmargin}{.05\linewidth}}
       \rmfamily\small
       \item[]{\setlength{\parskip}{0ex}\hrulefill\par
        \nopagebreak{\bfseries\textsf{Algorithm \thealgorithmctr~}}}}
   {{\setlength{\parskip}{-1ex}\nopagebreak\par\hrulefill} \end{list}}
\newtheorem{theorem}{Theorem}
\title{\Large{A Semidefinite Programming Method for Integer Convex Quadratic Minimization}}
\author{Jaehyun Park \and Stephen Boyd}
\begin{document}
\maketitle

\begin{abstract}
We consider the NP-hard problem of minimizing a convex quadratic function over
the integer lattice $\integers^n$. We present a simple semidefinite
programming (SDP) relaxation for obtaining a nontrivial lower bound on the
optimal value of the problem. By interpreting the solution to the SDP
relaxation probabilistically, we obtain a randomized algorithm for finding
good suboptimal solutions, and thus an upper bound on the optimal value. The
effectiveness of the method is shown for numerical problem instances of
various sizes.
\end{abstract}
%\tableofcontents \newpage

\section{Introduction}

We consider the NP-hard problem
\BEQ\label{problem_statement}
\begin{array}{ll}
\mbox{minimize} & f(x) = x^T P x + 2q^T x \\
\mbox{subject to} & x \in \integers^n,
\end{array}
\EEQ
with variable $x$, where $P \in \reals^{n \times n}$ is nonzero, symmetric,
and positive semidefinite, and $q \in \reals^n$.

A number of other problems can be reduced to the form
of~(\ref{problem_statement}). The \emph{integer least squares problem},
\BEQ\label{ils_statement}
\begin{array}{ll}
\mbox{minimize} & \|Ax - b\|_2^2 \\
\mbox{subject to} & x \in \integers^n,
\end{array}
\EEQ
with variable $x$ and data $A \in \reals^{m \times n}$ and $b \in \reals^m$,
is easily reduced to the form of~(\ref{problem_statement}) by expanding out
the objective function. The mixed-integer
version of the problem, where some components of $x$ are allowed to be
real numbers, also reduces to an equivalent problem with integer variables
only. This transformation uses the Schur complement to explicitly minimize over
the noninteger variables~\cite[\S A.5.5]{boyd2004convex}. Another equivalent
formulation of~(\ref{problem_statement}) is the \emph{closest vector problem},
\[
\begin{array}{ll}
\mbox{minimize} & \|v - z\|_2^2 \\
\mbox{subject to} & z \in \{Bx \,|\, x \in \integers^n \},
\end{array}
\]
in the variable $z \in \reals^m$. Typically, the columns of $B$ are linearly
independent. Although not equivalent to~(\ref{problem_statement}), the
\emph{shortest vector problem} is also a closely related problem, which in
fact, is reducible to solving the closest vector problem:
\[
\begin{array}{ll}
\mbox{minimize} & \|z\|_2^2 \\
\mbox{subject to} & z \in \{Bx \,|\, x \in \integers^n \} \\
& z \ne 0.
\end{array}
\]

\iffalse
The \emph{Boolean least squares problem} is a variant
of~(\ref{problem_statement}) where each variable $x_i$ is restricted to be
either $0$ or $1$:
\BEQ\label{boolean_least_squares}
\begin{array}{ll}
\mbox{minimize} & \|Ax - b\|_2^2 \\
\mbox{subject to} & x \in \{0, 1\}^n.
\end{array}
\EEQ
Without loss of generality, every problem instance
of~(\ref{problem_statement}) can be reformulated in the form
of~(\ref{boolean_least_squares}) with additional variables, by treating each
digit of the binary expansion of the integer variables as a Boolean variable.
\fi

Problem~(\ref{problem_statement}) arises in several applications. For example,
in position estimation using the Global Positioning System (GPS), resolving
the integer ambiguities of the phase data is posed as a mixed-integer least
squares problem~\cite{hassibi1998integer}. In multiple-input multiple-output
(MIMO) wireless communication systems, maximum likelihood detection of
(vector) Boolean messages involves solving an integer least squares
problem~\cite{jalden2005complexity}. The mixed-integer version of the least
squares problem appears in data fitting applications, where some parameters
are integer-valued. (See, \eg,~\cite{ustun2016supersparse}.) The closest
vector problem and shortest vector problem have numerous application areas in
cryptanalysis of public key cryptosystem such as RSA~\cite{nguyen2001two}. The
\emph{spectral test}, which is used to check the quality of linear
congruential random number generators, is an application of the shortest
vector problem~\cite[\S3.3.4]{knuth1997taocp2}.

\subsection{Previous work}

Several hardness results are known for the integer least squares
problem~(\ref{ils_statement}). Given an instance of the integer least squares
problem, define the approximation factor of a point $x$ to be
$\|Ax-b\|_2^2 / \|Ax^\star - b\|_2^2$, where $x^\star$ is the global (integer)
solution of~(\ref{ils_statement}). Finding a constant factor approximation is
an NP-hard problem~\cite{arora1993hardness}. In fact, finding an
approximation still remains NP-hard even when the target approximation factor
is relaxed to $n^{c/\log \log n}$, where $c>0$ is some
constant~\cite{dinur2003approximating}.

Standard methods for finding the global optimum of~(\ref{problem_statement}),
in the case of positive definite $P$, work by enumerating all integer points
within a suitably chosen box or
ellipsoid~\cite{fincke1985improved,buchheim2012effective}. The worst case
running time of these methods is exponential in $n$, making it impractical for
problems of large size. Algorithms such as Lenstra--Lenstra--Lov{\'a}sz
lattice reduction algorithm~\cite{lenstra1982factoring,schnorr1994lattice} can
be used to find an approximate solution in polynomial time, but the
approximation factor guarantee is exponential in
$n$~\cite[\S5.3]{grotschel2012geometric}.

A simple lower bound on $f^\star$, the optimal value
of~(\ref{problem_statement}), can be obtained in $O(n^3)$ time, by removing
the integer constraint. If $q \in \mathcal{R}(P)$, where $\mathcal{R}(P)$
denotes the range of $P$, then this continuous relaxation has a solution
$x^\mathrm{cts} = -P^\dagger q$, with objective value $f^\mathrm{cts} = -q^T
P^\dagger q$, where $P^\dagger$ denotes the Moore--Penrose pseudoinverse of
$P$. (When $P$ is positive definite, the continuous solution reduces to
$x^\mathrm{cts} = -P^{-1} q$.) If $q \notin \mathcal{R}(P)$, then the
objective function is unbounded below and $f^\star = -\infty$.

There exist different approaches for obtaining tighter lower bounds than
$f^\mathrm{cts}$. The strongest bounds to date are based on semidefinite
programming (SDP)
relaxation~\cite{buchheim2012effective,buchheim2013semidefinite,buchheim2015ellipsoid}.
The primary drawback of the SDP-based methods is
their running time. In particular, if these methods are applied to
branch-and-bound type enumeration methods to prune the search tree, the
benefit of having a stronger lower bound is overshadowed by the additional
computational cost it incurs, for all small- to medium-sized problems.
Enumeration methods still take exponential time in the number of variables,
whereas solving SDPs can be done (practically) in $O(n^3)$ time. Thus, for
very large problems, SDP-based lower bounds are expected to reduce the total
running time of the enumeration methods. However, such problems would be too
big to have any practical implication. On the other hand, there exist weaker
bounds that are quicker to compute; in~\cite{buchheim2015ellipsoid}, for
example, these bounds are obtained by finding a quadratic function $\tilde f$
that is a global underestimator of $f$, that has the additional property that
the integer point minimizing $\tilde f$ can be found simply by rounding
$x^\mathrm{cts}$ to the nearest integer point. Another approach is given
by~\cite{bienstock2010eigenvalue}, which is to minimize $f$ outside an
ellipsoid that can be shown to contain no integer point. Standard results on
the $\mathcal{S}$-procedure state that optimizing a quadratic function outside
an ellipsoid, despite being a nonconvex problem, can be done exactly and
efficiently~\cite{boyd1994linear}.

A simple upper bound on $f^\star$ can be obtained by observing some properties
of the problem. First of all, $x=0$ gives a trivial upper bound of $f(0) = 0$,
which immediately gives $f^\star \le 0$. Another simple approximate solution
can be obtained by rounding each entry of $x^\mathrm{cts}$ to the nearest
integer point, $x^\mathrm{rnd}$. Let $f^\mathrm{rnd} = f(x^\mathrm{rnd})$.
Assuming that $q \in \mathcal{R}(P)$, we can get a bound on $f^\mathrm{rnd}$
as follows. Start by rewriting the objective function as
\[
f(x) = (x-x^\mathrm{cts})^T P (x-x^\mathrm{cts}) + f^\mathrm{cts}.
\]
Since rounding changes each coordinate by at most $1/2$, we have
\[
\|x^\mathrm{rnd} - x^\mathrm{cts}\|_2^2 =
\sum_{i=1}^n (x^\mathrm{rnd}_i - x^\mathrm{cts}_i)^2 \le n/4.
\]
It follows that
\BEQ\label{ubround}
f^\mathrm{rnd} - f^\mathrm{cts} =
(x^\mathrm{rnd}-x^\mathrm{cts})^T P (x^\mathrm{rnd}-x^\mathrm{cts}) \le
\sup_{\|v\|_2 \le \sqrt{n}/2} v^T P v = (n/4)\omega_{\max},
\EEQ
where $\omega_{\max}$ is the largest eigenvalue of $P$. Since $f^\mathrm{cts}$
is a lower bound on $f^\star$, this inequality bounds the suboptimality of
$x^\mathrm{rnd}$. We note that in the special case of diagonal $P$, the
objective function is separable, and thus the rounded solution is optimal.
However, in general, $x^\mathrm{rnd}$ is not optimal, and in fact,
$f^\mathrm{rnd}$ can be positive, which is even worse than the trivial upper
bound $f(0) = 0$.

We are not aware of any efficient method of finding a strong upper bound on
$f^\star$, other than performing a local search or similar heuristics on
$x^\mathrm{rnd}$. However, the well-known result by~\cite{goemans1995improved}
gives provable lower and upper bounds on the optimal value of the NP-hard
\emph{maximum cut problem}, which, after a simple reformulation, can be cast
as a Boolean nonconvex quadratic problem in the following form:
\BEQ\label{maxcut}
\begin{array}{ll}
\mbox{maximize} & x^T W x \\
\mbox{subject to} & x_i^2 = 1, \quad i=1, \ldots, n.
\end{array}
\EEQ
These bounds were obtained by solving an SDP relaxation of~(\ref{maxcut}), and
subsequently running a randomized algorithm using the solution of the
relaxation. The expected approximation factor of the randomized algorithm is
approximately $0.878$. There exist many extensions of the Goemans--Williamson
SDP relaxation~\cite{luo2010semidefinite}. In
particular,~\cite{buchheim2013semidefinite} generalizes this idea to a more
general domain $D_1 \times \cdots \times D_n$, where each $D_i$ is any closed
subset of $\reals$.

\subsection{Our contribution}
Our aim is to present a simple but powerful method of producing \emph{both}
lower and upper bounds on the optimal value $f^\star$
of~(\ref{problem_statement}). Our SDP relaxation is an adaptation
of~\cite{goemans1995improved}, but can also be recovered by appropriately
using the method in~\cite{buchheim2013semidefinite}.
By considering the binary expansion of the
integer variables as a Boolean variable, we can
reformulate~(\ref{problem_statement}) as a Boolean problem
and directly apply the method of~\cite{goemans1995improved}.
This reformulation, however, increases the size of the problem
and incurs additional computational cost.
To avoid this, we work with the
formulation~(\ref{problem_statement}), at the expense of slightly looser SDP-based bound.
We show that our lower bound still consistently outperforms other lower
bounds shown in~\cite{buchheim2012effective,buchheim2015ellipsoid}.
In particular, the new bound is better than the best axis-parallel ellipsoidal
bound, which also requires solving an SDP.
%The high computational cost of solving an SDP hinders a direct application to the branch-and-bound scheme (or its variant).

Using the solution of the SDP relaxation, we construct a randomized algorithm that finds good
feasible points. In addition, we present a simple local search heuristic that
can be applied to every point generated by the randomized algorithm.
Evaluating the objective at these points gives an upper bound on the optimal value.
This upper bound provides a
good starting point for enumeration methods, and can save a significant amount
of time during the search process. We show this by comparing the running time
of an enumeration method, when different initial upper bounds on the optimal
value were given. Also, we empirically verify that this upper bound is much
stronger than simply rounding a fractional solution to the nearest integer
point, and in fact, is near-optimal for randomly generated problem instances.

\section{Lagrange duality} \label{s-lagrange}
In this section, we discuss a Lagrangian relaxation for obtaining a nontrivial
lower bound on $f^\star$. We make three assumptions without
loss of generality. Firstly, we assume that $q \in \mathcal{R}(P)$, so that
the optimal value $f^\star$ is not unbounded below. Secondly, we assume that
$x^\mathrm{cts} \notin \integers^n$, otherwise $x^\mathrm{cts}$ is already
the global solution. Lastly, we assume that $x^\mathrm{cts}$ is in the box
$[0, 1]^n$. For any arbitrary problem instance, we can translate the
coordinates in the following way to satisfy this assumption. Note that for any
$v \in \integers^n$, the problem below is equivalent
to~(\ref{problem_statement}):
\[
\begin{array}{ll}
\mbox{minimize} & (x-v)^T P (x-v) + 2(Pv+q)^T (x-v) + f(v) \\
\mbox{subject to} & x \in \integers^n.
\end{array}
\]
By renaming $x-v$ to $x$ and ignoring the constant term $f(v)$, the problem
can be rewritten in the form of~(\ref{problem_statement}). Clearly, this has
different solutions and optimal value from the
original problem, but the two problems are related by a simple change of
coordinates: point $x$ in the new problem corresponds to $x+v$ in the
original problem. To translate the coordinates, find
$x^\mathrm{cts} = -P^\dagger q$, and take elementwise floor to
$x^\mathrm{cts}$ to get $x^\mathrm{flr}$. Then, substitute $x^\mathrm{flr}$ in
place of $v$ above.

We note a simple fact that every integer point $x$ satisfies either $x_i \le
0$ or $x_i \ge 1$ for all $i$. Equivalently, this condition can be written as
$x_i(x_i - 1) \ge 0$ for all $i$. Using this, we relax the integer constraint
$x \in \integers^n$ into a set of nonconvex quadratic constraints:
$x_i(x_i - 1) \ge 0$ for all $i$.
The following nonconvex problem is then
a relaxation of~(\ref{problem_statement}):
\BEQ\label{nonconvex_relaxation}
\begin{array}{ll}
\mbox{minimize} & x^T P x + 2 q^T x \\
\mbox{subject to} & x_i(x_i-1) \ge 0, \quad i=1, \ldots, n.
\end{array}
\EEQ
It is easy to see that the optimal value of~(\ref{nonconvex_relaxation}) is
greater than or equal to $f^\mathrm{cts}$, because $x^\mathrm{cts}$ is not a
feasible point, due to the two assumptions that $x^\mathrm{cts} \notin
\integers^n$ and $x^\mathrm{cts} \in [0, 1]^n$. Note that the second
assumption was necessary, for otherwise $x^\mathrm{cts}$ is the global optimum
of~(\ref{nonconvex_relaxation}), and the Lagrangian relaxation described below
would not produce a lower bound that is better than $f^\mathrm{cts}$.

The Lagrangian of~(\ref{nonconvex_relaxation}) is given by
\[
L(x, \lambda) = x^T P x + 2q^T x - \sum_{i=1}^n \lambda_i x_i(x_i-1) = x^T (P - \diag(\lambda)) x + 2(q + (1/2)\lambda)^T x,
\]
where $\lambda \in \reals^n$ is the vector of dual variables.
Define $\tilde q(\lambda) = q + (1/2)\lambda$. By minimizing the Lagrangian
over $x$, we get the Lagrangian dual function
\BEQ\label{dual_function}
g(\lambda) = \left\{ \begin{array}{ll}
-\tilde{q}(\lambda)^T \left(P - \diag(\lambda) \right)^\dagger \tilde q(\lambda) \quad
& \mbox{if } P-\diag(\lambda) \succeq 0 \mbox{ and } \tilde q(\lambda) \in \mathcal{R}(P-\diag(\lambda)) \\
-\infty & \mbox{otherwise,} \end{array} \right.
\EEQ
where the inequality $\succeq$ is with respect to the positive semidefinite
cone. The Lagrangian dual problem is then
\BEQ\label{dual_relaxation}
\begin{array}{ll}
\mbox{maximize} & g(\lambda) \\
\mbox{subject to} & \lambda \ge 0,
\end{array}
\EEQ
in the variable $\lambda \in \reals^n$, or equivalently,
\[
\begin{array}{ll}
\mbox{maximize} & -\tilde{q}(\lambda)^T \left(P - \diag(\lambda) \right)^\dagger \tilde q(\lambda) \\
\mbox{subject to} & P-\diag(\lambda) \succeq 0 \\
& \tilde q(\lambda) \in \mathcal{R}(P-\diag(\lambda)) \\
& \lambda \ge 0.
\end{array}
\]
By using the Schur complements, the problem can be reformulated into an SDP:
\BEQ\label{dual_sdp_form}
\begin{array}{ll}
\mbox{maximize} & -\gamma \\
\mbox{subject to} & \left[ \begin{array}{cc}
P - \diag(\lambda) & q + (1/2)\lambda \\
(q + (1/2)\lambda)^T & \gamma \end{array} \right] \succeq 0 \\
& \lambda \ge 0,
\end{array}
\EEQ
in the variables $\lambda \in \reals^n$ and $\gamma \in \reals$.
We note that while~(\ref{dual_sdp_form}) is derived from a nonconvex
problem~(\ref{nonconvex_relaxation}), it is convex and thus can be
solved in polynomial time.

\subsection{Comparison to simple lower bound}

Due to weak duality, we have $g(\lambda) \le f^\star$ for any
$\lambda \ge 0$, where $g(\lambda)$ is defined by~(\ref{dual_function}). Using
this property, we show a provable bound on the Lagrangian lower bound. Let
$f^\mathrm{cts} = -q^T P^\dagger q$ be the
simple lower bound on $f^\star$, and
$f^\mathrm{sdp} = \sup_{\lambda \ge 0} g(\lambda)$ be the lower bound obtained
by solving the Lagrangian dual. Also, let $\omega_1 \ge \cdots \ge \omega_n$
be the eigenvalues of $P$. For clarity of notation, we use $\omega_{\max}$ and
$\omega_{\min}$ to denote the largest and smallest eigenvalues of $P$, namely
$\omega_1$ and $\omega_n$. Let $\ones$ represent a vector of an appropriate
length with all components equal to one. Then, we have the following result.

\begin{theorem}\label{t-lbguarantee}
The lower bounds satisfy
\BEQ\label{lbguarantee}
f^\mathrm{sdp} - f^\mathrm{cts} \ge \frac{n \omega_{\min}^2}{4\omega_{\max}} \left( 1 - \frac{\left\|x^\mathrm{cts} - (1/2)\ones \right\|_2^2}{n/4}\right)^2.
\EEQ
\end{theorem}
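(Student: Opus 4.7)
The strategy is to exhibit a single dual-feasible $\lambda \ge 0$ and use weak duality $f^\mathrm{sdp} \ge g(\lambda)$ to lower bound $f^\mathrm{sdp}$. The natural one-parameter family is $\lambda = c\ones$ with $c \in [0, \omega_{\min}]$, which automatically ensures $\lambda \ge 0$ and $P - \diag(\lambda) = P - cI \succeq 0$. It therefore suffices to exhibit one value of $c$ in this range for which $g(c\ones) - f^\mathrm{cts}$ attains the right-hand side of~\eqref{lbguarantee}.

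The first concrete step is to put $g(c\ones)$ into a usable closed form. Substituting $q = -P x^\mathrm{cts}$ into the Lagrangian $L(x, c\ones) = x^T(P - cI)x + 2(q + (c/2)\ones)^T x$, switching to the centered variable $y^\mathrm{cts} = x^\mathrm{cts} - (1/2)\ones$, and applying the identity $(P - cI)^{-1}P = I + c(P - cI)^{-1}$ to expand the resulting matrix quadratic form, a short calculation yields
\[
g(c\ones) - f^\mathrm{cts} = c\Bigl(\tfrac{n}{4} - \|y^\mathrm{cts}\|_2^2\Bigr) - c^2\,(y^\mathrm{cts})^T (P - cI)^{-1} y^\mathrm{cts}.
\]
The leading linear term is nonnegative because the assumption $x^\mathrm{cts} \in [0, 1]^n$ forces $\|y^\mathrm{cts}\|_2^2 \le n/4$. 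I would then eliminate the matrix inverse using the spectral bound $(P-cI)^{-1} \preceq (\omega_{\min} - c)^{-1} I$, which is valid since $P \succeq \omega_{\min} I$, obtaining the purely scalar lower bound
\[
g(c\ones) - f^\mathrm{cts} \;\ge\; c\Bigl(\tfrac{n}{4} - \|y^\mathrm{cts}\|_2^2\Bigr) - \frac{c^2\,\|y^\mathrm{cts}\|_2^2}{\omega_{\min} - c}.
\]

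Finally I would maximize this concave-in-$c$ expression over $c \in [0, \omega_{\min})$. Setting the derivative to zero and substituting $s = c/(\omega_{\min} - c)$ turns the first-order condition into a simple quadratic in $s$, whose positive root yields an explicit optimizer $c^\star$; plugging it back in produces a closed-form lower bound, which one then coerces into the compact form on the right-hand side of~\eqref{lbguarantee}. The main obstacle I anticipate is precisely this last algebraic simplification: the natural optimum takes the shape $(n\omega_{\min}/4)(1 - 2\|y^\mathrm{cts}\|_2/\sqrt{n})^2$, and reshaping it into the stated $(n\omega_{\min}^2/(4\omega_{\max}))(1 - \|y^\mathrm{cts}\|_2^2/(n/4))^2$ requires loosening inequalities that trade a factor of $\omega_{\min}$ for $\omega_{\min}^2/\omega_{\max}$ (using $\omega_{\min} \le \omega_{\max}$) and repackaging the $(1-r)^2$ term via $\|y^\mathrm{cts}\|_2^2 \le n/4$. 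A secondary and routine issue is checking that $c^\star$ actually lies in $[0, \omega_{\min}]$; if it does not, one evaluates $g$ at the boundary and argues the bound separately.
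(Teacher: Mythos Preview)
Your restriction to $\lambda = c\ones$ and the closed form
\[
g(c\ones) - f^\mathrm{cts} \;=\; c\Bigl(\tfrac{n}{4} - \|y^\mathrm{cts}\|_2^2\Bigr) - c^2\,(y^\mathrm{cts})^T (P - cI)^{-1} y^\mathrm{cts}
\]
are correct; this is precisely the paper's expression for $g(\alpha)-g(0)$, rewritten without the eigendecomposition. The paper also restricts to $\lambda=\alpha\ones$, so up to this point the two arguments coincide. They diverge only in how the spectrum of $P$ is eliminated: you apply $(P-cI)^{-1}\preceq(\omega_{\min}-c)^{-1}I$ first and then optimize the resulting scalar function of $c$, whereas the paper locates the exact maximizer $\alpha^\star$ of $g(\alpha\ones)$ via $g'(\alpha^\star)=0$ and only afterwards inserts eigenvalue bounds. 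Your route legitimately produces the bound $(n\omega_{\min}/4)(1-r)^2$ with $r=2\|y^\mathrm{cts}\|_2/\sqrt{n}$.

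Your anticipated obstacle in the last step is real, and in fact insurmountable for the statement as printed. Trading $\omega_{\min}$ for $\omega_{\min}^2/\omega_{\max}$ is a valid weakening, but ``repackaging'' $(1-r)^2$ into $(1-r^2)^2=(1-r)^2(1+r)^2$ goes the wrong way: for every $r\in(0,1]$ one has $(1-r)^2<(1-r^2)^2$, so no loosening will reach the stated right-hand side. This is not a defect of your method. The paper's own derivation contains a slip in the step bounding $\alpha^\star$ from below, where $\bigl(\frac{\tilde q_i+(1/2)\omega_i s_i}{\omega_i-\alpha^\star}\bigr)^2$ is rewritten as $\frac{\omega_i}{\omega_i-\alpha^\star}(\tilde q_i/\omega_i+(1/2)s_i)^2$ rather than $\frac{\omega_i^2}{(\omega_i-\alpha^\star)^2}(\tilde q_i/\omega_i+(1/2)s_i)^2$; with that corrected the paper's argument also yields the factor $(1-r)^2$ instead of $(1-r^2)^2$. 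A sanity check: for $n=1$, $P=1$, $x^\mathrm{cts}=0.9$ one computes $f^\mathrm{sdp}-f^\mathrm{cts}=0.01$, while the stated right-hand side equals $\tfrac14(1-0.64)^2=0.0324$. Your bound $(n\omega_{\min}/4)(1-r)^2$ is the correct inequality, and it already implies the repaired version $(n\omega_{\min}^2/(4\omega_{\max}))(1-r)^2$.
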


\begin{proof}
When $\omega_{\min} = 0$, the righthand side of~(\ref{lbguarantee}) is zero,
and there is nothing else to show. Thus, without loss of generality, we assume
that $\omega_{\min} > 0$, \ie, $P \succ 0$.

Let $P = Q \diag(\omega) Q^T$ be the eigenvalue decomposition of $P$, where
$\omega = (\omega_1, \ldots, \omega_n)$. We consider $\lambda$ of the form
$\lambda = \alpha \ones$, and rewrite the dual function in terms of $\alpha$,
where $\alpha$ is restricted to the range $\alpha \in [0, \omega_{\min})$:
\[
g(\alpha) = -(q + (1/2)\alpha\ones)^T \left(P - \alpha I \right)^{-1} (q + (1/2)\alpha\ones).
\]
We note that $g(0) = -q^T P^{-1} q = f^\mathrm{cts}$, so it is enough to show
the same lower bound on $g(\alpha) - g(0)$ for any particular value of
$\alpha$.

Let $s = Q^T \ones$, and $\tilde q = Q^T q$. By expanding out $g(\alpha)$ in
terms of $s$, $\tilde q$, and $\omega$, we get
\iffalse
\[
g(\alpha) = -\sum_{i=1}^n \frac{\tilde q_i^2 + \alpha s_i \tilde q_i + (1/4)\alpha^2 s_i^2}{\omega_i - \alpha}.
\]
Then,
\fi
\BEAS
g(\alpha) - g(0) &=& -\sum_{i=1}^n \frac{\tilde q_i^2 + \alpha s_i \tilde q_i + (1/4)\alpha^2 s_i^2}{\omega_i - \alpha} + \sum_{i=1}^n \frac{\tilde q_i^2}{\omega_i}\\
% &=& -\sum_{i=1}^n \frac{(\alpha /\omega_i) \tilde q_i^2 + \alpha s_i \tilde q_i + (1/4)\alpha^2 s_i^2}{\omega_i - \alpha} \\
&=& -\sum_{i=1}^n \frac{(\alpha/\omega_i) (\tilde q_i+(1/2)\omega_i s_i)^2 - (1/4)\alpha s_i^2(\omega_i - \alpha)}{\omega_i - \alpha} \\
&=& \frac{\alpha}{4} \sum_{i=1}^n s_i^2 - \sum_{i=1}^n \frac{\alpha (\tilde q_i+(1/2)\omega_i s_i)^2}{\omega_i(\omega_i - \alpha)} \\
&=& \frac{\alpha n}{4} - \alpha \sum_{i=1}^n \left(1 - \frac{\alpha}{\omega_i}\right) \left( \frac{\tilde q_i+(1/2)\omega_i s_i}{\omega_i - \alpha}\right)^2.
\EEAS
By differentiating the expression above with respect to $\alpha$, we get
\[
g'(\alpha) = \frac{n}{4} - \sum_{i=1}^n \left( \frac{\tilde q_i+(1/2)\omega_i s_i}{\omega_i - \alpha}\right)^2.
\]
We note that $g'$ is a decreasing function in $\alpha$ in the interval
$[0, \omega_{\min})$. Also, at $\alpha = 0$, we have
\BEAS
g'(0) &=& \frac{n}{4} - \sum_{i=1}^n ( \tilde q_i/\omega_i +(1/2) s_i)^2 \\
&=& \frac{n}{4} - \left\|\diag(\omega)^{-1} Q^T q + (1/2) Q^T \ones\right\|_2^2 \\
&=& \frac{n}{4} - \left\|-Q \diag(\omega)^{-1} Q^T q - (1/2) QQ^T \ones\right\|_2^2 \\
&=& \frac{n}{4} - \left\|-P^{-1} q - (1/2) \ones\right\|_2^2 \\
&=& \frac{n}{4} - \left\|x^\mathrm{cts} - (1/2) \ones\right\|_2^2 \\
&\ge& 0.
\EEAS
The last line used the fact that $x^\mathrm{cts}$ is in the box $[0, 1]^n$.

Now, we distinguish two cases depending on whether the equation $g'(\alpha) = 0$
has a solution in the interval $[0, \omega_{\min})$.
\begin{enumerate}
\item Suppose that $g'(\alpha^\star) = 0$ for
some $\alpha^\star \in [0, \omega_{\min})$. Then, we have
\BEAS
g(\alpha^\star) - g(0) &=& \frac{\alpha^\star n}{4} - \alpha^\star \sum_{i=1}^n \left(1 - \frac{\alpha^\star}{\omega_i}\right) \left( \frac{\tilde q_i+(1/2)\omega_i s_i}{\omega_i - \alpha^\star}\right)^2 \\
&=& \alpha^\star \left(\frac{n}{4} - \sum_{i=1}^n \left( \frac{\tilde q_i+(1/2)\omega_i s_i}{\omega_i - \alpha^\star}\right)^2 \right) + \sum_{i=1}^n \frac{\alpha^{\star 2}}{\omega_i} \left( \frac{\tilde q_i+(1/2)\omega_i s_i}{\omega_i - \alpha^\star}\right)^2 \\
&=& \sum_{i=1}^n \frac{\alpha^{\star 2}}{\omega_i}\left(\frac{\tilde q_i + (1/2)\omega_i s_i}{\omega_i - \alpha^\star}\right)^2 \\
&\ge& \frac{\alpha^{\star 2}}{\omega_{\max}} \sum_{i=1}^n \left(\frac{\tilde q_i + (1/2)\omega_i s_i}{\omega_i - \alpha^\star}\right)^2 \\
&=& \frac{n \alpha^{\star 2}}{4 \omega_{\max}}.
\EEAS
Using this, we go back to the equation $g'(\alpha^\star) = 0$ and
establish a lower bound on $\alpha^\star$:
\BEAS
\frac{n}{4} &=& \sum_{i=1}^n \left( \frac{\tilde q_i+(1/2)\omega_i s_i}{\omega_i - \alpha^\star}\right)^2 \\
&=& \sum_{i=1}^n \frac{\omega_i}{\omega_i - \alpha^\star} ( \tilde q_i / \omega_i + (1/2) s_i)^2 \\
&\le& \frac{\omega_{\min}}{\omega_{\min} - \alpha^\star} \sum_{i=1}^n ( \tilde q_i / \omega_i + (1/2) s_i)^2 \\
&=& \frac{\omega_{\min}}{\omega_{\min} - \alpha^\star} \left\|x^\mathrm{cts} - (1/2) \ones\right\|_2^2.
\EEAS
From this inequality, we have
\[
\alpha^\star \ge \omega_{\min}\left(1-\frac{\|x^\mathrm{cts}-(1/2)\ones\|_2^2}{n/4}\right).
\]
Plugging in this lower bound on $\alpha^\star$ gives
\[
f^\mathrm{sdp} - g(0) \ge g(\alpha^\star) - g(0) \ge \frac{n\omega_{\min}^2}{4 \omega_{\max}}\left( 1 - \frac{\|x^\mathrm{cts}-(1/2)\ones\|_2^2}{n/4} \right)^2.
\]

\item If $g'(\alpha) \ne 0$ for all $\alpha \in [0, \omega_{\min})$, then by
continuity of $g'$, it must be the case that $g'(\alpha) \ge 0$ on the
range $[0, \omega_{\min})$.
\iffalse
That is, for every
$\alpha \in [0, \omega_{\min})$,
\[
\sum_{i=1}^n \left( \frac{\tilde q_i+(1/2)\omega_i s_i}{\omega_i - \alpha}\right)^2 \le \frac{n}{4}.
\]
\fi
Then, for all $\alpha \in [0, \omega_{\min})$,
\BEAS
f^\mathrm{sdp} - g(0) &\ge& g(\alpha)-g(0) \\
&=& \frac{\alpha n}{4} - \alpha \sum_{i=1}^n \left(1 - \frac{\alpha}{\omega_i}\right) \left( \frac{\tilde q_i+(1/2)\omega_i s_i}{\omega_i - \alpha}\right)^2 \\
&\ge& \frac{\alpha n}{4} - \alpha \left(1 - \frac{\alpha}{\omega_{\max}}\right) \sum_{i=1}^n \left( \frac{\tilde q_i+(1/2)\omega_i s_i}{\omega_i - \alpha}\right)^2 \\
&\ge& \frac{\alpha n}{4} - \alpha \left(1 - \frac{\alpha}{\omega_{\max}}\right) \frac{n}{4} \\
&=& \frac{n \alpha^2}{4\omega_{\max}},
\EEAS
and thus,
\[
f^\mathrm{sdp} - g(0) \ge \lim_{\alpha \rightarrow \omega_{\min}} \frac{n \alpha^2}{4\omega_{\max}} = \frac{n \omega_{\min}^2}{4 \omega_{\max}}.
\]
\end{enumerate}

Therefore, in both cases, the increase in the lower bound is guaranteed
to be at least
\[
\frac{n\omega_{\min}^2}{4 \omega_{\max}}\left( 1 - \frac{\|x^\mathrm{cts}-(1/2)\ones\|_2^2}{n/4} \right)^2,
\]
as claimed.
\end{proof}

Now we discuss several implications of Theorem~\ref{t-lbguarantee}. First, we
note that the righthand side of~(\ref{lbguarantee}) is always nonnegative, and
is monotonically decreasing in
$\left\|x^\mathrm{cts} - (1/2)\ones \right\|_2$. In particular, when
$x^\mathrm{cts}$ is an integer point, then we must have
$f^\mathrm{cts} = f^\mathrm{sdp} = f^\star$.
Indeed, for $x^\mathrm{cts} \in \{0, 1\}^n$, we
have $\left\|x^\mathrm{cts} - (1/2)\ones \right\|_2^2 = n/4$, and the
righthand side of~(\ref{lbguarantee}) is zero. Also, when $P$ is positive
definite, \ie, $\omega_{\min} > 0$, then~(\ref{lbguarantee}) implies that
$f^\mathrm{sdp} > f^\mathrm{cts}$.

\iffalse
Let $\delta$ be the vector defined by
$\delta_i = |x^\mathrm{rnd}_i - x^\mathrm{cts}_i|$.
For each $i$, we have $\delta_i + |x^\mathrm{cts}_i - 1/2| = 1/2$.
It follows that
\[
\|x^\mathrm{cts} - (1/2)\ones\|_2^2 = \sum_{i=1}^n (x^\mathrm{cts}_i - 1/2)^2 = \sum_{i=1}^n (1/2 - \delta_i)^2 = \|\delta\|_2^2 - \|\delta\|_1 + n/4.
\]
Plugging this into the statement of the theorem, we get
\[
f^\mathrm{sdp} - f^\mathrm{cts} \ge \frac{4 \omega_{\min}^2}{n\omega_{\max}} (\|\delta\|_1 - \|\delta\|_2^2)^2.
\]
On the other hand, using $\delta$, we can obtain a tighter upper bound on
$f^\mathrm{rnd}$ than~(\ref{ubround}):
\[
f^\mathrm{rnd} \le f^\mathrm{cts} + \omega_{\max} \|\delta\|_2^2.
\]
Combining with the result of Theorem~\ref{t-lbguarantee} gives
\[
f^\mathrm{rnd}-f^\mathrm{sdp} \le \omega_{\max}\|\delta\|_2^2 - \frac{4 \omega_{\min}^2}{n\omega_{\max}} (\|\delta\|_1 - \|\delta\|_2^2)^2.
\]
Since $0 \le \delta_i \le 1/2$, we have
$\delta_i - \delta_i^2 \ge (1/2)\delta_i$ for all $i$.
Then, $\|\delta\|_1 - \|\delta\|_2^2 \ge (1/2)\|\delta\|_1$.
This gives a looser, but simpler expression for the
suboptimality of $x^\mathrm{rnd}$:
\[
f^\mathrm{rnd}-f^\star \le f^\mathrm{rnd}-f^\mathrm{sdp} \le \omega_{\max}\|\delta\|_2^2 - \frac{\omega_{\min}^2}{\omega_{\max}} \frac{\|\delta\|_1^2}{n}.
\]
\fi

In order to obtain the bound on $f^\mathrm{sdp}$, we only considered vectors
$\lambda$ of the form $\alpha \ones$. Interestingly,
solving~(\ref{dual_relaxation}) with this additional restriction is equivalent
to solving the following problem:
\BEQ\label{sproc_relaxation}
\begin{array}{ll}
\mbox{minimize} & x^T P x + 2q^T x \\
\mbox{subject to} & \|x - (1/2)\ones\|_2^2 \ge n/4.
\end{array}
\EEQ
The nonconvex constraint enforces that $x$ lies outside the $n$-dimensional
sphere centered at $(1/2)\ones$ that has every lattice point $\{0, 1\}^n$ on
its boundary. Even if~(\ref{sproc_relaxation}) is not a convex problem, it can
be solved exactly; the $\mathcal{S}$-lemma implies that the SDP relaxation
of~(\ref{sproc_relaxation}) is tight (see, \eg,~\cite{boyd1994linear}). For
completeness, we give the dual of the SDP relaxation (which has the same
optimal value as~(\ref{sproc_relaxation})) below, which is exactly what we
used to prove Theorem~\ref{t-lbguarantee}:
\[
\begin{array}{ll}
\mbox{maximize} & -\gamma \\
\mbox{subject to} & \left[ \begin{array}{cc}
P - \alpha I & q + (\alpha/2)\ones \\
(q + (\alpha/2)\ones)^T & \gamma \end{array} \right] \succeq 0 \\
& \alpha \ge 0.
\end{array}
\]
\iffalse
A similar technique discussed in~\cite{bienstock2010eigenvalue} is to use the
constraint
\[
\|x - x^\mathrm{cts}\|_2^2 \ge \|x^\mathrm{cts} - x^\mathrm{rnd}\|_2^2,
\]
instead of
\[
\|x - (1/2)\ones\|_2^2 \ge n/4.
\]
These two inequalities do not dominate each other, and one can yield a tighter lower bound than
the other, depending on $x^\mathrm{cts}$ and $x^\mathrm{rnd}$.
However,~(\ref{dual_relaxation}) is always tighter than~(\ref{sproc_relaxation}),
since~(\ref{sproc_relaxation}) arises from~(\ref{dual_relaxation}) by
requiring that all entries of $\lambda$ be equal.
\fi

\section{Semidefinite relaxation}
In this section, we show another convex relaxation
of~(\ref{nonconvex_relaxation}) that is equivalent to~(\ref{dual_sdp_form}),
but with a different form. By introducing a new variable $X = x x^T$, we can
reformulate~(\ref{nonconvex_relaxation}) as:
\[
\begin{array}{ll}
\mbox{minimize} & \Tr(P X) + 2q^T x \\
\mbox{subject to} & \diag(X) \ge x \\
& X = x x^T,
\end{array}
\]
in the variables $X \in \reals^{n \times n}$ and $x \in \reals^n$. Observe
that the constraint $\diag(X) \ge x$, along with $X = xx^T$, is a rewriting
of the constraint $x_i(x_i - 1) \ge 0$ in~(\ref{nonconvex_relaxation}).

Then, we relax the nonconvex constraint $X = x x^T$ into $X \succeq x x^T$,
and rewrite it using the Schur complement to obtain a convex
relaxation:
\BEQ\label{primal_relaxation}
\begin{array}{ll}
\mbox{minimize} & \Tr(P X) + 2q^T x \\
\mbox{subject to} & \diag(X) \ge x \\
& \left[ \begin{array}{cc} X & x \\ x^T & 1 \end{array} \right] \succeq 0.
\end{array}
\EEQ
The optimal value of problem~(\ref{primal_relaxation}) is a lower bound on
$f^\star$, just as the Lagrangian relaxation~(\ref{dual_sdp_form}) gives a
lower bound $f^\mathrm{sdp}$ on $f^\star$. In fact,
problems~(\ref{dual_sdp_form}) and~(\ref{primal_relaxation}) are duals of each
other, and they yield the same lower bound
$f^\mathrm{sdp}$~\cite{vandenberghe1996semidefinite}.

\iffalse
Using the dual relationship between~(\ref{dual_sdp_form})
and~(\ref{primal_relaxation}), we derive the Karush-Kuhn-Tucker (KKT)
optimality condition for the SDP~(\ref{primal_relaxation}). Let $X$, $x$,
$\lambda$, and $\gamma$ be any points that satisfy the following conditions:
\BEAS
\diag(X) &\ge& x \\
X &\succeq& xx^T \\
\lambda &\ge& 0 \\
\left[\begin{array}{cc} P-\diag(\lambda) & q+(1/2)\lambda \\ (q+(1/2)\lambda)^T & \gamma \end{array}\right] &\succeq& 0 \\
\lambda_i(X_{ii} - x_i) &=& 0, \quad i=1, \ldots, n \\
\Tr \left( \left[\begin{array}{cc} X & x \\ x^T & 1 \end{array}\right] \left[\begin{array}{cc} P-\diag(\lambda) & q+(1/2)\lambda \\ (q+(1/2)\lambda)^T & \gamma \end{array}\right] \right) &=& 0.
\EEAS
Then, $(X, x)$ are optimal in~(\ref{primal_relaxation}), and
$(\lambda, \gamma)$ are optimal in~(\ref{dual_sdp_form}).
The converse also holds, \ie,
if $(X^\star, x^\star)$ and $(\lambda^\star, \gamma^\star)$ are
solutions of~(\ref{primal_relaxation}) and~(\ref{dual_sdp_form}), then they
satisfy the KKT conditions above.
\fi

\iffalse
In terms of the asymptotic running time, there is no benefit to solve one
relaxation over the other, as both relaxations require $O(n^3)$ flops to solve
using an interior point method, assuming that a constant number of iterations
is needed (which is the case in practice). This running time is asymptotically
optimal, in the sense that the simple
lower bound $f^\mathrm{cts} = -q^T P^\dagger q$
also requires $O(n^3)$ flops to compute.
\fi

\subsection{Randomized algorithm} \label{s-rand}

The semidefinite relaxation~(\ref{primal_relaxation}) has a natural
probabilistic interpretation, which can be used to construct a simple
randomized algorithm for obtaining good suboptimal solutions, \ie, feasible
points with low objective value. Let $(X^\star, x^\star)$ be any
solution to~(\ref{primal_relaxation}). Suppose $z \in \reals^n$ is a Gaussian
random variable with mean $\mu$ and covariance matrix $\Sigma$. Then, $\mu =
x^\star$ and $\Sigma = X^\star - x^\star x^{\star T}$ solve the following
problem of minimizing the expected value of a quadratic form, subject to
quadratic inequalities:
\[
\begin{array}{ll}
\mbox{minimize} & \Expect(z^T P z + 2q^T z) \\
\mbox{subject to} & \Expect (z_i(z_i-1)) \ge 0, \quad i=1, \ldots, n,
\end{array}
\]
in variables $\mu \in \reals^n$ and $\Sigma \in \reals^{n\times n}$.
Intuitively, this distribution $\mathcal{N}(\mu, \Sigma)$ has mean close to
$x^\mathrm{cts}$ so that the expected objective value is low, but each
diagonal entry of $\Sigma$ is large enough so that when $z$ is sampled from
the distribution, $z_i(z_i-1) \ge 0$ holds in expectation. While sampling $z$
from $\mathcal{N}(\mu, \Sigma)$ does not give a feasible point
to~(\ref{problem_statement}) immediately, we can simply round it to the
nearest integer point to get a feasible point. Using these observations, we
present the following randomized algorithm.

\begin{algdesc}\label{randalg}
\emph{Randomized algorithm for suboptimal solution to~(\ref{problem_statement}).}
\begin{tabbing}
{\bf given} number of iterations $K$. \\*[\smallskipamount]
1.\ \emph{Solve SDP.} Solve~(\ref{primal_relaxation}) to get $X^\star$ and $x^\star$.\\
2.\ \emph{Form covariance matrix.} $\Sigma := X^\star - x^\star x^{\star T}$, and find Cholesky factorization $LL^T = \Sigma$.\\
3.\ \emph{Initialize best point.} $x^\mathrm{best} := 0$ and $f^\mathrm{best} := 0$.\\
{\bf for} $k=1, 2, \ldots, K$ \\
\qquad \= 4.\ \emph{Random sampling.} $z^{(k)} := x^\star + Lw$, where $w \sim \mathcal{N}(0, I)$. (Same as $z^{(k)} \sim \mathcal{N}(x^\star, \Sigma)$.) \\
\> 5.\ \emph{Round to nearest integer.} $x^{(k)} := \round(z^{(k)})$. \\
\> 6.\ \emph{Update best point.} If $f^\mathrm{best} > f(x^{(k)})$, then set $x^\mathrm{best} := x^{(k)}$ and $f^\mathrm{best} := f(x^{(k)})$ .
\end{tabbing}
\end{algdesc}

The SDP in step 1 takes $O(n^3)$ time to solve, assuming that the number of
iterations required by an interior point method is constant. Step 2 is
dominated by the computation of Cholesky factorization, which uses roughly
$n^3/3$ flops. Steps 4 through 6 can be done in $O(n^2)$ time. The overall
time complexity of the method is then $O(n^2(K+n))$. By choosing $K = O(n)$,
the time complexity can be made $O(n^3)$.

\section{Greedy algorithm for obtaining a 1-opt solution} \label{s-oneopt}

Here we discuss a simple greedy descent algorithm that starts from an integer
point, and iteratively moves to another integer point that has a lower
objective value. This method can be applied to the simple suboptimal point
$x^\mathrm{rnd}$, or every $x^{(k)}$ found in Algorithm~\ref{randalg}, to
yield better suboptimal points.

We say that $x \in \integers^n$ is \emph{1-opt} if the objective value at $x$
does not improve by changing a single coordinate, \ie, $f(x + c e_i) \ge f(x)$
for all indices $i$ and integers $c$. The difference in the function values at
$x$ and $x+ce_i$ can be written as
\[
f(x+c e_i) - f(x) = c^2 P_{ii}+cg_i = P_{ii}(c+g_i/(2P_{ii}))^2 - g_i^2/(4P_{ii}),
\]
where $g = 2(Px + q)$ is the gradient of $f$ at $x$. It is easily seen that
given $i$, the expression above is minimized when
$c = \round(-g_i/(2P_{ii}))$.
For $x$ to be optimal with respect to $x_i$, then $c$
must be $0$, which is the case if and only if
\iffalse
\[
-\frac{1}{2} \le -\frac{g_i}{2P_{ii}} \le \frac{1}{2},
\]
or equivalently,
\fi
$P_{ii} \ge |g_i|$. Thus, $x$ is 1-opt if and only if
$\diag(P) \ge |g|$, where the absolute value on the righthand side is taken
elementwise.

Also, observe that
\[
P(x + c e_i)+q = (Px + q) + c P_i,
\]
where $P_i$ is the $i$th column of $P$. Thus, when $x$ changes by a single
coordinate, the value of $g$ can be updated just by referencing a single
column of $P$. These observations suggest a simple and quick greedy algorithm
for finding a 1-opt point from any given integer point $x$.

\begin{algdesc}\label{oneoptalg}
\emph{Greedy descent algorithm for obtaining 1-opt point.}
\begin{tabbing}
{\bf given} an initial point $x \in \integers^n$. \\*[\smallskipamount]
1.\ \emph{Compute initial gradient.} $g = 2(Px + q)$.\\
{\bf repeat} \\
\qquad \= 2.\ \emph{Stopping criterion.} {\bf quit} if $\diag(P) \ge |g|$. \\
\> 3.\ \emph{Find descent direction.} Find index $i$ and integer $c$ minimizing $c^2 P_{ii} + c g_i$. \\
\> 4.\ \emph{Update $x$.} $x_i := x_i + c$. \\
\> 5.\ \emph{Update gradient.} $g := g + 2c P_i$.
\end{tabbing}
\end{algdesc}

Initializing $g$ takes $O(n^2)$ flops, but each subsequent iteration only
takes $O(n)$ flops. This is because steps 2 and 3 only refer to the diagonal
elements of $P$, and step 5 only uses a single column of $P$. Though we do not
give an upper bound on the total number of iterations, we show, using
numerical examples, that the average number of iterations until convergence is
roughly $0.14n$, when the initial points are sampled according to the
probability distribution given in~\S\ref{s-rand}. The overall time complexity
of Algorithm~\ref{oneoptalg} is then $O(n^2)$ on average. Thus, we can run the
greedy 1-opt descent on every $x^{(k)}$ in Algorithm~\ref{randalg}, without
changing its overall time complexity $O(n^2(K+n))$.

\iffalse
A similar approach as above yields a \emph{2-opt} point, which cannot be
improved by changing at most two coordinates at the same time. However, every
iteration uses $O(n^2)$ flops, which makes the algorithm significantly slower
for larger problems.
\fi

\section{Examples}

In this section, we consider numerical examples to show the performance of the
SDP-based lower bound and randomized algorithm, developed in previous
sections.

\subsection{Method}

We combine the techniques developed in previous sections to find lower and
upper bounds on $f^\star$, as well as suboptimal solutions to the problem. By solving the
simple relaxation and rounding the solution, we immediately get a lower bound
$f^\mathrm{cts}$ and an upper bound $f^\mathrm{rnd} = f(x^\mathrm{rnd})$. We
also run Algorithm~\ref{oneoptalg} on $x^\mathrm{rnd}$ to get a 1-opt point,
namely ${\hat x}^\mathrm{rnd}$. This gives another upper bound ${\hat
f}^\mathrm{rnd} = f({\hat x}^\mathrm{rnd})$.

Then, we solve the semidefinite relaxation~(\ref{primal_relaxation}) to get a
lower bound $f^\mathrm{sdp}$. Using the solution to the SDP, we run
Algorithm~\ref{randalg} to obtain suboptimal solutions, and keep the best
suboptimal solution $x^\mathrm{best}$. In addition, we run
Algorithm~\ref{oneoptalg} on every feasible point considered in step 4 of
Algorithm~\ref{randalg}, and find the best 1-opt suboptimal solution ${\hat
x}^\mathrm{best}$. The randomized algorithm thus yields two additional upper
bounds on $f^\star$, namely $f^\mathrm{best} =
f(x^\mathrm{best})$ and ${\hat f}^\mathrm{best} = f({\hat x}^\mathrm{best})$.

The total number of iterations $K$ in Algorithm~\ref{randalg} is set to $K =
3n$, so that the overall time complexity of the algorithm, not counting the
running time of the 1-opt greedy descent algorithm, is $O(n^3)$. We note that
the process of sampling points and running Algorithm~\ref{oneoptalg} trivially
parallelizes.

\subsection{Numerical examples}\label{s-prob-generation}

% With the exception of the simple example considered in the beginning of the
% next section, we use random instances of the integer least squares
We use random instances of the integer least squares
problem~(\ref{ils_statement}) generated in the following way. First, the
entries of $A \in \reals^{m\times n}$ are sampled independently from
$\mathcal{N}(0, 1)$. The dimensions are set as $m = 2n$. We set
$q = -P x^\mathrm{cts}$, where $P = A^T A$,
and $x^\mathrm{cts}$ is randomly drawn
from the box $[0, 1]^n$. The problem is then scaled so that the simple
lower bound is $-1$, \ie, $f^\mathrm{cts} = -q^T P^\dagger q = -1$.

There are other ways to generate random problem instances. For example, the
eigenspectrum of $P$ is controlled by the magnitude of $m$ relative to $n$. We
note that $P$ becomes a near-diagonal matrix as $m$ diverges to infinity,
because the columns of $A$ are uncorrelated. This makes the integer least
squares problem easier to solve. On the contrary, smaller $m$ makes the
problem harder to solve. Another way of generating random problem instances is
to construct $P$ from a predetermined eigenspectrum $\omega_1, \ldots,
\omega_n$, as $P = Q\diag(\omega) Q^T$, where $Q$ is a random rotation matrix.
This makes it easy to generate a matrix with a desired condition number. Our
method showed the same qualitative behavior on data generated in these
different ways, for larger or smaller $m$, and also for different
eigenspectra.

The SDP~(\ref{primal_relaxation}) was solved using CVX~\cite{cvx,gb08} with
the MOSEK 7.1 solver~\cite{mosek}, on a 3.40 GHz Intel Xeon machine. For
problems of relatively small size $n \le 70$, we found the optimal point
using MILES~\cite{chang2007miles}, a branch-and-bound algorithm for mixed-integer
least squares problems, implemented in MATLAB. MILES
solves~(\ref{problem_statement}) by enumerating lattice points in a suitably
chosen ellipsoid. The enumeration method is based on various algorithms
developed in~\cite{chang2005mlambda,agrell2002closest,schnorr1994lattice,fincke1985improved}.

\subsection{Results}\label{s-results}

\paragraph{Lower bounds.}
We compare various lower bounds on $f^\star$. In~\cite{buchheim2015ellipsoid},
three lower bounds on $f^\star$ are shown, which we denote by
$f^\mathrm{axp}$, $f^\mathrm{qax}$, and $f^\mathrm{qrd}$, respectively. These
bounds are constructed from underestimators of $f$ that have a \emph{strong
rounding property}, \ie, the integer point minimizing the function is obtained
by rounding the continuous solution. We denote the lower bound obtained by
solving the following trust region problem in~\cite{bienstock2010eigenvalue}
by $f^\mathrm{tr}$:
\[
\begin{array}{ll}
\mbox{minimize} & x^T P x + 2q^T x \\
\mbox{subject to} & \|x - x^\mathrm{cts}\|_2^2 \ge \|x^\mathrm{cts} - x^\mathrm{rnd}\|_2^2.
\end{array}
\]

\iffalse
First, we use the simple example with $n = 2$
considered in~\cite{buchheim2015ellipsoid}:
\[
P = \frac{1}{42}
\left[ \begin{array}{cc} 1719 & -476 \\ -476 & 189 \end{array} \right],
\]
with varying continuous solution $x^\mathrm{cts}$ (which determines $q$).
Table~\ref{simple_lb_comparison} compares the five lower bounds on $f^\star$
described above. For simplicity, instead of the actual values, the offsets
from the simple lower bound $f^\mathrm{cts}$ are shown.

\begin{table}
\begin{center}
\begin{tabular}{|c||c|c|c|c|c|c|}
\hline
$x^\mathrm{cts}$ & $f^\star$ & $f^\mathrm{sdp}$ & $f^\mathrm{axp}$ & $f^\mathrm{qax}$ & $f^\mathrm{qrd}$ & $f^\mathrm{tr}$ \\ \hline\hline
$(0.2, 0.7)$ & $0.6688$ & $0.6404$ & $0.4954$ & $0.4954$ & $0.2269$ & $0.1640$ \\ \hline
$(0.9, 0.3)$ & $1.0276$ & $0.4185$ & $0.1343$ & $0.2276$ & $0.1839$ & $0.1262$ \\ \hline
$(0.9, 0.4)$ & $0.6693$ & $0.4013$ & $0.2179$ & $0.2179$ & $0.2359$ & $0.2145$ \\ \hline
\end{tabular}
\end{center}
\caption{Comparison of the lower bounds on a simple example.}
\label{simple_lb_comparison}
\end{table}
\fi

To the best of our knowledge, there is no standard benchmark test set for
the integer least squares problem.
Thus, we compared the lower bounds on randomly generated problem instances; for
each problem size, we generated $100$ random problem instances. Note that in
all instances, the simple lower bound was $f^\mathrm{cts} = -1$. We found not
only that our method found a tighter lower bound on average, but also that our
method performed consistently better, \ie, in all problem instances, the SDP
based lower bound was higher than any other lower bound. We found that the
pairs of lower bounds $(f^\mathrm{axp}, f^\mathrm{qax})$ and
$(f^\mathrm{qrd}, f^\mathrm{tr})$ were practically equal, although
%Table~\ref{simple_lb_comparison} shows that they can be all different.
%The results of~\cite{buchheim2015ellipsoid} show that there is still a
%nontrivial gap between $f^\mathrm{axp}$ and $f^\mathrm{qax}$, and we conjecture
%that this disparity comes from different problem sizes and eigenspectra of the
%random problem instances.
the results of~\cite{buchheim2015ellipsoid} show that they can be all different.
We conjecture that this disparity comes from different problem sizes and
eigenspectra of the random problem instances.
The solution $f^\star$ was not computed for $n > 70$ as MILES was unable
to find it within a reasonable amount of time.

\begin{table}
\begin{center}
\begin{tabular}{|c||c|c|c|c|c|c|}
\hline
$n$ & $f^\star$ & $f^\mathrm{sdp}$ & $f^\mathrm{axp}$ & $f^\mathrm{qax}$ & $f^\mathrm{qrd}$ & $f^\mathrm{tr}$ \\ \hline\hline
% 40   & $-0.8354$ & $-0.9117$ & $-0.9396$ & $-0.9396$ & $-0.9723$ & $-0.9723$ \\ \hline
50   & $-0.8357$ & $-0.9162$ & $-0.9434$ & $-0.9434$ & $-0.9736$ & $-0.9736$ \\ \hline
60   & $-0.8421$ & $-0.9202$ & $-0.9459$ & $-0.9459$ & $-0.9740$ & $-0.9740$ \\ \hline
70   & $-0.8415$ & $-0.9212$ & $-0.9471$ & $-0.9471$ & $-0.9747$ & $-0.9747$ \\ \hline
% 80   &   N/A     & $-0.9280$ & $-0.9518$ & $-0.9518$ & $-0.9771$ & $-0.9771$ \\ \hline
100  &   N/A     & $-0.9268$ & $-0.9509$ & $-0.9509$ & $-0.9755$ & $-0.9755$ \\ \hline
500  &   N/A     & $-0.9401$ & $-0.9606$ & $-0.9606$ & $-0.9777$ & $-0.9777$ \\ \hline
1000 &   N/A     & $-0.9435$ & $-0.9630$ & $-0.9630$ & $-0.9781$ & $-0.9781$ \\ \hline
\end{tabular}
\end{center}
\caption{Average lower bound by number of variables.}
\label{lb_comparison}
\end{table}

To see how tight $f^\mathrm{sdp}$ compared to the simple lower bound is, we
focus on the set of $100$ random problem instances of size $n = 60$, and
show, in Figure~\ref{lb_gaps_small}, the distribution of the gap between
$f^\star$ and the lower bounds.

\begin{figure}
\begin{center}
\includegraphics[width=.6\textwidth]{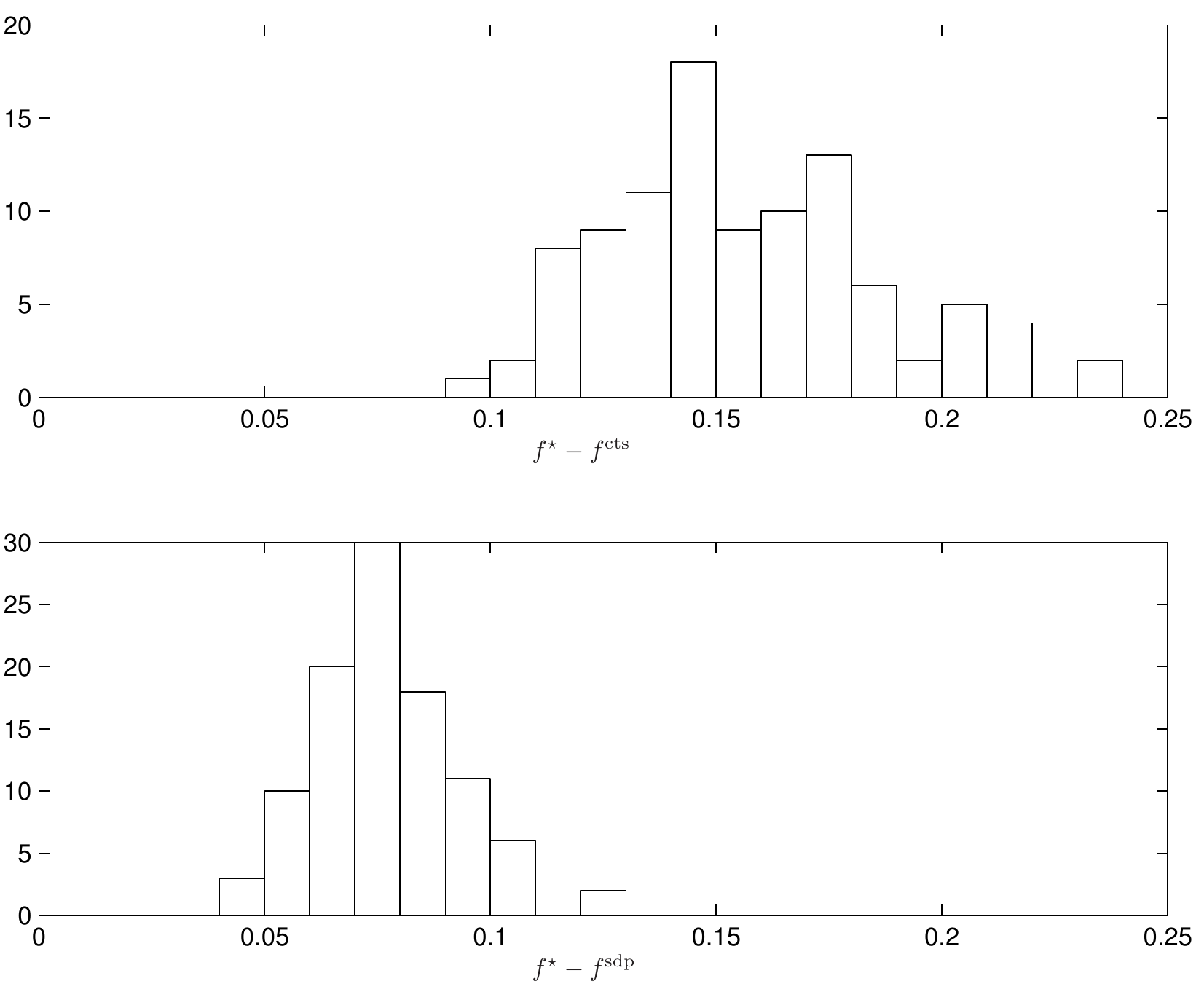}
\end{center}
\caption{Histograms of the gap between the optimal value $f^\star$ and the two lower bounds $f^\mathrm{cts}$ and $f^\mathrm{sdp}$, for $100$ random problem instances of size $n=60$.}
\label{lb_gaps_small} \end{figure}

\paragraph{Upper bounds.}
Algorithm~\ref{randalg} gives a better suboptimal solution as the number of
samples $K$ grows.
To test the
relationship between the number of samples and the quality of suboptimal
solutions, we considered a specific problem instance of size $n = 500$
and sampled $K = 50n$ points.
\iffalse
Figure~\ref{ub_samplesize} shows the best
upper bounds $f^\mathrm{best}$ and ${\hat f}^\mathrm{best}$ versus the number of iterations.
Other random
problem instances showed the same qualitative results.
\fi
The result suggested that $K = 3n$ is a large enough number of samples for
most problems; in order to decrease ${\hat f}^\mathrm{best}$ further, many
more samples were necessary. All subsequent experiments discussed below used
$K=3n$ as the number of sample points.

\iffalse
\begin{figure}
\begin{center}
\includegraphics[width=.6\textwidth]{}
\end{center}
\caption{Best upper bounds $f^\mathrm{best}$ and ${\hat f}^\mathrm{best}$ found in Algorithm~\ref{randalg}, and lower bounds $f^\mathrm{cts}$ and $f^\mathrm{sdp}$, versus iteration $k$.}
\label{ub_samplesize} \end{figure}
\fi

In Table~\ref{ub_comparison}, we compare different upper bounds on $f^\star$
using the same set of test data considered above. We found
that Algorithm~\ref{randalg} combined with the 1-opt heuristic gives a
feasible point whose objective value is, on average, within $5\times 10^{-3}$
from the optimal value. The last column of Table~\ref{ub_comparison} indicates
the percentage of the problem instances
for which ${\hat f}^\mathrm{best} = f^\star$ held; we not only found near-optimal solutions, but for most
problems, the randomized algorithm actually terminated with the global
solution. We expect the same for larger problems, but have no evidence since
there is no efficient way to verify optimality.

\begin{table}
\begin{center}
\begin{tabular}{|c||c|c|c|c|c|c|}
\hline
$n$ & $f^\star$ & ${\hat f}^\mathrm{best}$ & $f^\mathrm{best}$ & ${\hat f}^\mathrm{rnd}$ & $f^\mathrm{rnd}$ & Optimal \\ \hline\hline
% 40   & $-0.8354$ & $-0.8349$ & $-0.8255$ & $-0.8163$ & $-0.7333$ & $89\%$ \\ \hline
50   & $-0.8357$ & $-0.8353$ & $-0.8240$ & $-0.8186$ & $-0.7365$ & $90\%$ \\ \hline
60   & $-0.8421$ & $-0.8420$ & $-0.8268$ & $-0.8221$ & $-0.7397$ & $94\%$ \\ \hline
70   & $-0.8415$ & $-0.8412$ & $-0.8240$ & $-0.8235$ & $-0.7408$ & $89\%$ \\ \hline
% 80   &   N/A     & $-0.8513$ & $-0.8342$ & $-0.8356$ & $-0.7563$ & N/A    \\ \hline
100  &   N/A     & $-0.8465$ & $-0.8235$ & $-0.8296$ & $-0.7488$ & N/A    \\ \hline
500  &   N/A     & $-0.8456$ & $-0.7991$ & $-0.8341$ & $-0.7466$ & N/A    \\ \hline
1000 &   N/A     & $-0.8445$ & $-0.7924$ & $-0.8379$ & $-0.7510$ & N/A    \\ \hline
\end{tabular}
\end{center}
\caption{Average upper bound by number of variables.}
\label{ub_comparison}
\end{table}

We take the same test data used to produce Figure~\ref{lb_gaps_small}, and
show histograms of the suboptimality of $x^\mathrm{rnd}$, ${\hat
x}^\mathrm{rnd}$, $x^\mathrm{best}$, and ${\hat x}^\mathrm{best}$. The mean
suboptimality of $x^\mathrm{rnd}$ was $0.1025$, and simply finding a 1-opt
point from $x^\mathrm{rnd}$ improved the mean suboptimality to $0.0200$.
Algorithm~\ref{randalg} itself, without 1-opt refinement, produced suboptimal
points of mean suboptimality $0.0153$, and running Algorithm~\ref{oneoptalg}
on top of it reduced the suboptimality to $0.0002$.

\begin{figure}
\begin{center}
\includegraphics[width=.6\textwidth]{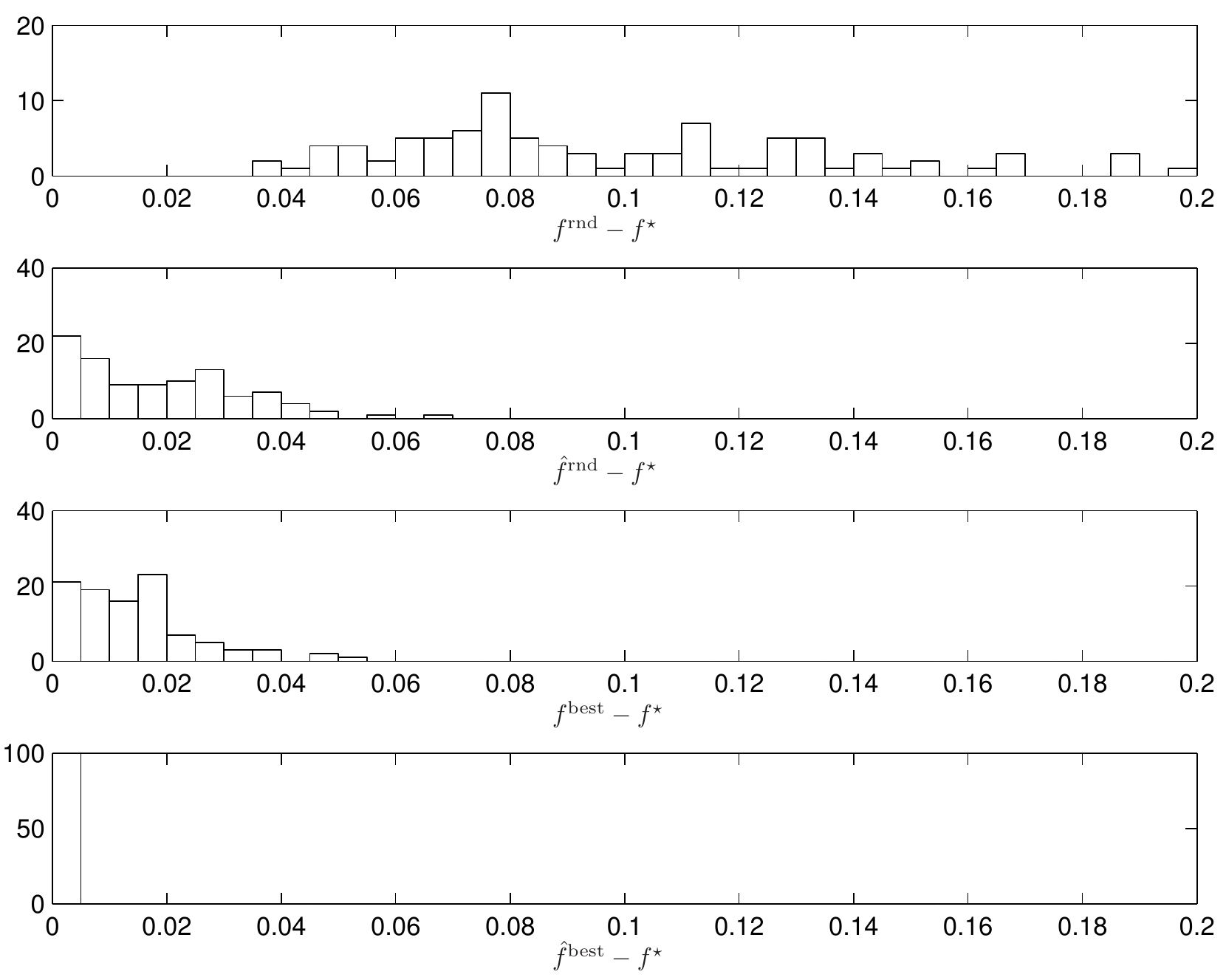}
\end{center}
\caption{Histograms of the suboptimality of $f^\mathrm{rnd}$, ${\hat f}^\mathrm{rnd}$, $f^\mathrm{best}$, and ${\hat f}^\mathrm{best}$, for $100$ random problem instances of size $n=60$.}
\label{ub_gaps_small} \end{figure}

Finally, we take problems of size $n = 1000$, where all existing
global methods run too slowly. As the optimal value
is unobtainable, we consider the gap given by the difference between the upper
and lower bounds. Figure~\ref{gaps_large} shows histograms of the four
optimality gaps obtained from our method, namely $f^\mathrm{rnd} -
f^\mathrm{cts}$, ${\hat f}^\mathrm{rnd} - f^\mathrm{cts}$, $f^\mathrm{best} -
f^\mathrm{sdp}$, and ${\hat f}^\mathrm{best} - f^\mathrm{sdp}$. The mean value
of these quantities were: $0.2490$, $0.1621$, $0.1511$, and $0.0989$. As seen
in Table~\ref{ub_comparison}, we believe that the best upper
bound ${\hat f}^\mathrm{best}$ is very close to the optimal value, whereas the
lower bound $f^\mathrm{sdp}$ is farther away from the optimal value.

\begin{figure}
\begin{center}
\includegraphics[width=.6\textwidth]{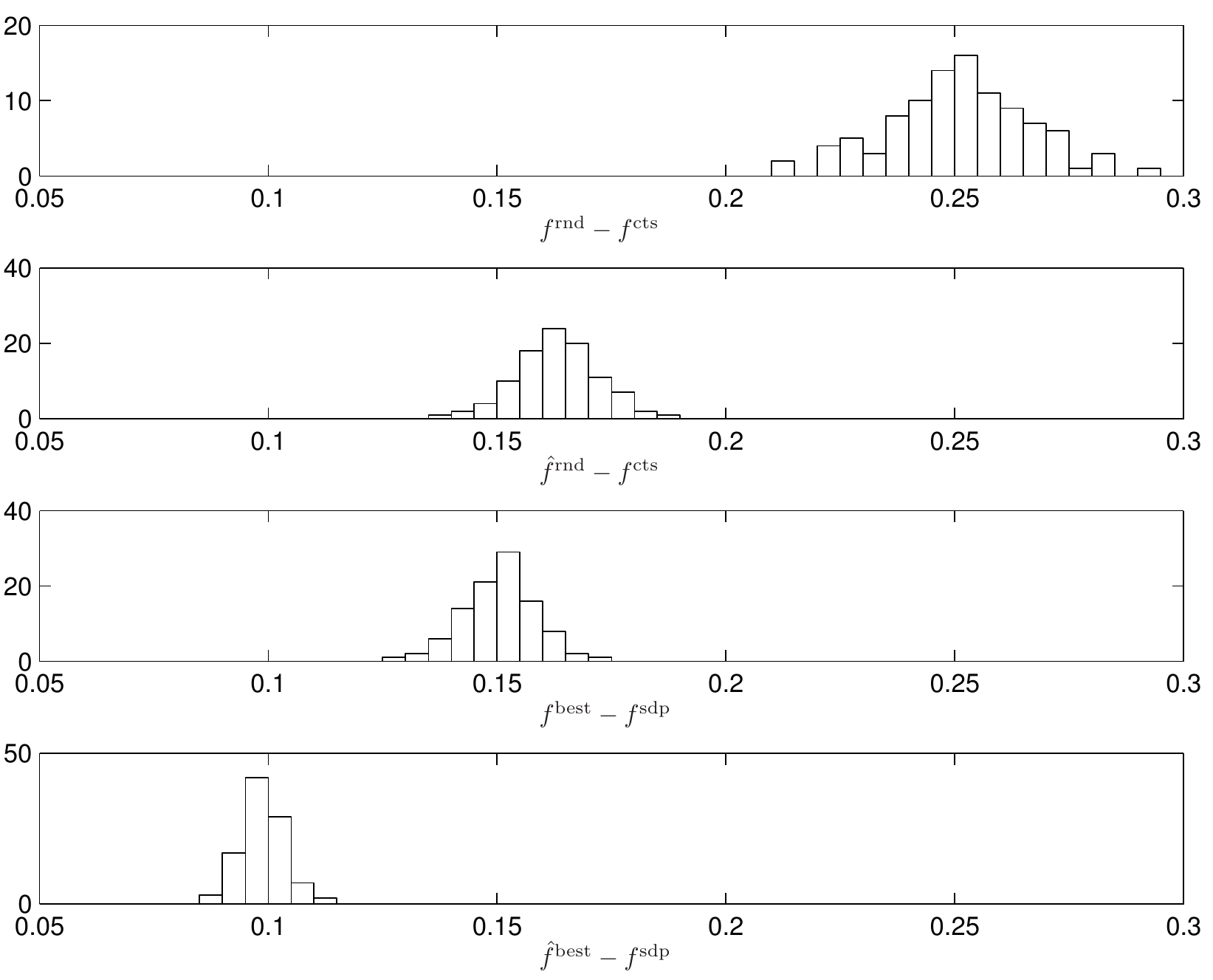}
\end{center}
\caption{Histograms of the four optimality gaps for $100$ random problem instances of large size $n=1000$.}
\label{gaps_large} \end{figure}

\paragraph{Running time.}
In Table~\ref{t-timing}, we compare the running time of our method and that of
MILES for problems of various sizes. The running time of MILES varied
depending on particular problem instances. For example, for $n = 70$, the
running time varied from $6.6$ seconds to $25$ minutes. Our method showed more
consistent running time, and terminated within 3 minutes for every problem
instance of the biggest size $n=1000$. It should be noted that MILES always
terminates with a global optimum, whereas our method does not have such a
guarantee, even though the experimental results suggest that the best suboptimal
point found is close to optimal.
From the breakdown of the running time of our method, we see that none of the
three parts of our method clearly dominates the total running time. We also
note that the total running time grows subcubically, despite the theoretical
running time of $O(n^3)$.

\begin{table}
\begin{center}
\begin{tabular}{|c||c|c||c|c|c|}
\hline
\multirow{2}{*}{$n$}
& \multicolumn{2}{c||}{Total running time}
& \multicolumn{3}{c|}{Breakdown of running time} \\ \cline{2-6}
& MILES & Our method & SDP & Random Sampling & Greedy 1-opt \\ \hline\hline
50   & $3.069$ & $0.397$ & $0.296$ & $0.065$ & $0.036$ \\ \hline
60   & $28.71$ & $0.336$ & $0.201$ & $0.084$ & $0.051$ \\ \hline
70   & $378.2$ & $0.402$ & $0.249$ & $0.094$ & $0.058$ \\ \hline
100  & N/A     & $0.690$ & $0.380$ & $0.193$ & $0.117$ \\ \hline
500  & N/A     & $20.99$ & $12.24$ & $4.709$ & $4.045$ \\ \hline
1000 & N/A     & $135.1$ & $82.38$ & $28.64$ & $24.07$ \\ \hline
% 40   & $0.349$ & $0.245$ & $0.174$ & $0.046$ & $0.025$ \\ \hline
% 80   & N/A     & $0.473$ & $0.259$ & $0.134$ & $0.079$ \\ \hline
\end{tabular}
\end{center}
\caption{Average running time of MILES and our method in seconds (left),
and breakdown of the running time of our method (right).}
\label{t-timing}
\end{table}

In a practical setting, if a near-optimal solution is good enough, then
running Algorithm~\ref{randalg} is more effective than enumeration algorithms.
Algorithm~\ref{randalg} is particularly useful if the problem size is $60$ or
more; this is the range where branch-and-bound type algorithms become unviable
due to their exponential running time. In practice, one may run an enumeration
algorithm (such as MILES) and terminate after a certain amount of time and use
the best found point as an approximation.
To show that Algorithm~\ref{randalg}
is a better approach of obtaining a good suboptimal solution, we compare our
method against MILES in the following way. First, we compute ${\hat
f}^\mathrm{best}$ via Algorithm~\ref{randalg}, and record the running time
$T$. Then, we run MILES on the same problem instance, but with time limit $T$,
\ie, we terminate MILES when its running time exceeds $T$, and record the best
suboptimal point found. Let $f^\mathrm{miles}$ be the objective value of this
suboptimal point. Table~\ref{t-ub-comparison} shows the average value of
$f^\mathrm{miles} - {\hat f}^\mathrm{best}$ and the percentage of problem
instances for which ${\hat f}^\mathrm{best} \le f^\mathrm{miles}$ held. The
experiment was performed on $100$ random problem instances of each size. We
observe that on every problem instance of size $n \ge 100$, our method produced
a better point than MILES, when alloted the same running time.

\begin{table}
\begin{center}
\begin{tabular}{|c||c|c|}
\hline
$n$ & $f^\mathrm{miles} - {\hat f}^\mathrm{best}$ & ${\hat f}^\mathrm{best} \le f^\mathrm{miles}$ \\ \hline\hline
50   & $0.0117$ & $98\%$ \\ \hline
60   & $0.0179$ & $99\%$ \\ \hline
70   & $0.0230$ & $99\%$ \\ \hline
100  & $0.0251$ & $100\%$ \\ \hline
500  & $0.0330$ & $100\%$ \\ \hline
1000 & $0.0307$ & $100\%$ \\ \hline
% 40   & $0.0075$ & $93\%$ \\ \hline
% 80   & $0.0228$ & $100\%$ \\ \hline
\end{tabular}
\end{center}
\caption{Comparison of the best suboptimal point found by our method and by MILES, when alloted the same running time.}
\label{t-ub-comparison}
\end{table}

\iffalse
\begin{table}
\begin{center}
\begin{tabular}{|c||c|c|c|}
\hline
$n$  & SDP & Random Sampling & Greedy 1-opt \\ \hline\hline
% 40   & $0.174$ & $0.046$ & $0.025$ \\ \hline
50   & $0.296$ & $0.065$ & $0.036$ \\ \hline
60   & $0.201$ & $0.084$ & $0.051$ \\ \hline
70   & $0.249$ & $0.094$ & $0.058$ \\ \hline
% 80   & $0.259$ & $0.134$ & $0.079$ \\ \hline
100  & $0.380$ & $0.193$ & $0.117$ \\ \hline
500  & $12.24$ & $4.709$ & $4.045$ \\ \hline
1000 & $82.38$ & $28.64$ & $24.07$ \\ \hline
\end{tabular}
\end{center}
\caption{Breakdown of the running time of our method.}
\label{t-timing-breakdown}
\end{table}
\fi

There is no simple bound for the number of iterations that
Algorithm~\ref{oneoptalg} takes, as it depends on both $P$ and $q$, as well as
the initial point. In Table~\ref{t-oneoptiters}, we give the average number of
iterations for Algorithm~\ref{oneoptalg} to terminate at a 1-opt point, when
the initial points are sampled from the distribution found in~\S\ref{s-rand}.
We see that the number of iterations when $n \le 1000$ is roughly $0.14n$.
Although the asymptotic growth in the number of iterations appears to be
slightly superlinear, as far as practical applications are concerned, the
number of iterations is effectively linear. This is because the
SDP~(\ref{primal_relaxation}) cannot be solved when the problem becomes much
larger (\eg, $n > 10^5$).

\begin{table}
\begin{center}
\begin{tabular}{|c||c|}
\hline
$n$ & Iterations \\ \hline\hline
% $40$   & $5.09$ \\ \hline
$50$   & $6.06$ \\ \hline
$60$   & $7.35$ \\ \hline
$70$   & $8.59$ \\ \hline
% $80$   & $9.58$ \\ \hline
$100$  & $11.93$ \\ \hline
$500$  & $65.89$ \\ \hline
$1000$ & $141.1$ \\ \hline
\end{tabular}
\end{center}
\caption{Average number of iterations of Algorithm~\ref{oneoptalg}.}
\label{t-oneoptiters}
\end{table}

\paragraph{Branch-and-bound method.}
The results of Table~\ref{t-ub-comparison} suggest that enumeration methods
that solve~(\ref{problem_statement}) globally can utilize
Algorithm~\ref{randalg} by taking the suboptimal solution ${\hat
x}^\mathrm{best}$ and using its objective value ${\hat f}^\mathrm{best}$ as
the initial bound on the optimal value.
In Table~\ref{t-bnb-speedup}, we show
the average running time of MILES versus $n$, when different initial bounds
were provided to the algorithm: $0$, $f^\mathrm{rnd}$, ${\hat
f}^\mathrm{rnd}$, ${\hat f}^\mathrm{best}$, and $f^\star$. For a fair
comparison, we included the running time of computing the respective upper
bounds in the total execution time, except in the case of $f^\star$. We found
that when $n = 70$, if we start with ${\hat f}^\mathrm{best}$ as the initial
upper bound of MILES, the total running time until the global solution is
found is roughly $24\%$ lower than running MILES with the trivial upper
bound of $0$. Even when $f^\star$ is provided as the initial bound,
branch-and-bound methods will still traverse a search tree to look for a
better point (though it will eventually fail to do so).
This running time, thus, can be thought as the baseline performance.
If we compare the running
time of the methods with respect to this baseline running time, the effect
of starting with a tight upper bound becomes more apparent. When $0$ is given
as the initial upper bound, MILES spends roughly $3$ more minutes traversing
the nodes that would have been pruned if it started with $f^\star$ as the
initial upper bound. However, if the initial bound is
${\hat f}^\mathrm{best}$, then the additional time spent
is only $10$ seconds, as opposed to $3$ minutes.
Finally, we note that the baseline running time accounts for more than $70\%$
of the total running time even when the trivial upper bound of zero is given;
this suggests that the core difficulty in the problem lies more in proving the
optimality than in finding an optimal point itself.
In our experiments, the simple lower bound $f^\mathrm{cts}$ was used to
prune the search tree. If tighter lower bounds are used instead, this
baseline running time changes, depending on how easy it is to evaluate
the lower bound, and how tight the lower bound is.

\begin{table}
\begin{center}
\begin{tabular}{|c||c|c|c|c|c|}
\hline
\multirow{2}{*}{$n$} & \multicolumn{5}{c|}{Initial upper bound} \\ \cline{2-6}
& $0$ & $f^\mathrm{rnd}$ & ${\hat f}^\mathrm{rnd}$ & ${\hat f}^\mathrm{best}$ & $f^\star$ \\ \hline\hline
50   & $3.046$ & $3.039$ & $2.941$ & $2.900$ & $2.537$ \\ \hline
60   & $29.02$ & $29.09$ & $28.14$ & $24.07$ & $23.56$ \\ \hline
70   & $379.6$ & $379.4$ & $361.1$ & $290.0$ & $280.6$ \\ \hline
\end{tabular}
\end{center}
\caption{Average running time of MILES, given different initial upper bounds.}
\label{t-bnb-speedup}
\end{table}

Directly using the SDP-based lower bound to improve the performance of
branch-and-bound methods is more challenging,
due to the overhead of solving the SDP.
The results by~\cite{buchheim2015ellipsoid}
suggest that in order for a branch-and-bound scheme to achieve faster running
time, quickly evaluating a lower bound is more important than the quality of
the lower bound itself. Even if our SDP-based lower bound is superior to any
other lower bound shown in related works, solving an SDP at every node in the
branch-and-bound search tree is computationally too costly. Indeed, the
SDP-based axis-parallel ellipsoidal bound in~\cite{buchheim2015ellipsoid} fails to
improve the overall running time of a branch-and-bound algorithm when applied
to every node in the search tree. An outstanding open problem is to find an
alternative to $f^\mathrm{sdp}$ that is quicker to compute. One possible
approach would be to look for (easy-to-find) feasible points
to~(\ref{dual_relaxation}); as noted in~\S\ref{s-lagrange}, it is not
necessary to solve (\ref{dual_relaxation}) optimally in order to compute a
lower bound, since any feasible point of it yields a lower bound on
$f^\star$. (See, \eg,~\cite{dong2016relaxing}.)

\subsection*{Acknowledgments}
We thank three anonymous referees for providing helpful comments and constructive remarks.

\nocite{*}\bibliography{refs}
\end{document}